\newtheorem{lem}{Lemma}[section]
\newtheorem{thm}[lem]{Theorem}
\newtheorem{prop}[lem]{Proposition}
\newtheorem{cor}[lem]{Corollary} 
\theoremstyle{definition}
\newtheorem{question}[lem]{Question}
\newtheorem{rem}[lem]{Remark} 
\newcommand{\N}{\ensuremath {\mathbb{N}}}
\newcommand{\R} {\ensuremath {\mathbb{R}}}
\newcommand{\Q} {\ensuremath {\mathbb{Q}}}
 \newcommand\norm{\bBigg@{0.8}}
 \newcommand{\indnorm}[2][flex]{\csname #1l\endcsname\|#2%
                                 \csname #1r\endcsname\|\mathclose{}}
\newcommand{\ucov}[1]{%
  \widetilde{#1}}
\DeclareMathOperator{\map}{map}
\DeclareMathOperator{\id}{id}
\title[Explicit $\ell^1$-efficient cycles]%
      {Explicit $\ell^1$-efficient cycles\\ and amenable normal subgroups}
\author{Clara L\"oh}
\subjclass[2010]{57R19, 55N99, 20F65}%
\keywords{simplicial volume, amenable groups}
\def\draftinfo{}
\date{\today.\ \copyright{\ C.~L\"oh 2017}. 
    This work was supported by the CRC~1085 \emph{Higher Invariants} 
    (Universit\"at Regensburg, funded by the DFG)\draftinfo}
\begin{document}

\begin{abstract}
  By Gromov's mapping theorem for bounded cohomology, the projection
  of a group to the quotient by an amenable normal subgroup is
  isometric on group homology with respect to the $\ell^1$-semi-norm.
  Gromov's description of the diffusion of cycles also implicitly
  produces efficient cycles in this situation. We present an elementary 
  version of this explicit construction.
\end{abstract}
\maketitle

\section{Introduction}

The size of classes in group homology with coefficients in a normed
module can be measured by the $\ell^1$-semi-norm
(Section~\ref{sec:l1}). This semi-norm admits a description in terms
of bounded cohomology~\cite{vbc}. 

By Gromov's mapping theorem for bounded cohomology, the
projection~$\pi \colon \Gamma \longrightarrow \Gamma/N$ of a group~$\Gamma$ to
the quotient by an amenable normal subgroup~$N$ induces an isometric
isomorphism in bounded cohomology~\cite{vbc,ivanov}. In particular,
the map induced by~$\pi$ on homology (e.g., with real coefficients) is
isometric with respect to the $\ell^1$-semi-norm. More concretely,
Gromov's description of diffusion of cycles also implicitly produces
$\ell^1$-efficient cycles in this situation; similar diffusion
arguments have been used in applications to Morse theory~\cite{alpertkatz}
and Lipschitz simplicial volume~\cite{strzalkowski}. In this note, we
present a slightly simplified version of this explicit construction.
Because we focus on the group homology case, we can avoid the use of
multicomplexes or model complexes for cycles. More precisely, we
consider averaging maps of the following type to produce
$\ell^1$-efficient cycles:

\begin{thm}\label{mainthm}
  Let $\Gamma$ be a group, let $N \subset \Gamma$ be a finitely
  generated amenable normal subgroup, let $n \in \N$, let $c \in
  C_n(\Gamma;\R)$ be a cycle, and let $(F_k)_{k \in \N}$ be a F\o lner
    sequence for~$N$. Then the cycles~$\psi_k(c)$ are homologous to~$c$ and 
  \[ \lim_{k\rightarrow \infty} \bigl| \psi_k(c)\bigr|_1 = |\overline c|_1.
  \]
  Here, $\overline c \in C_n(\Gamma/N;\R)$ denotes the push-forward
  of~$c$ under the canonical projection~$\Gamma \longrightarrow
  \Gamma/N$ and for~$k \in \N$, the averaging map~$\psi_k$ is
  defined by
  \begin{align*}
    \psi_k \colon C_n(\Gamma;\R)
    & \longrightarrow C_n(\Gamma;\R)
    \\
      [\gamma_0, \dots, \gamma_n]
      & \longmapsto \frac 1{|F_k|^{n+1}} \cdot \sum_{\eta \in F_k^{n+1}}
      [\gamma_0\cdot \eta_0, \dots, \gamma_n \cdot \eta_n]
  \end{align*}
\end{thm}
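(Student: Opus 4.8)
The plan is to verify that each $\psi_k$ is an augmentation-preserving $\R\Gamma$-chain map (which gives the homology statement for free) and then to bound the $\ell^1$-semi-norm from below by a projection argument and from above by a \emph{diffusion} estimate in which the F\o lner condition is the decisive input. First I would check that the analogously defined averaging maps in each degree form a chain map: in $\partial\psi_k$ the group element attached to the omitted vertex no longer appears, so summing it over $F_k$ contributes exactly the factor $|F_k|$ needed to pass from the normalisation $|F_k|^{-(n+1)}$ in degree $n$ to $|F_k|^{-n}$ in degree $n-1$, whence $\partial\psi_k=\psi_k\partial$. Since right translations commute with the diagonal left $\Gamma$-action, $\psi_k$ is $\R\Gamma$-equivariant, and on $C_0$ it preserves the augmentation (it averages $|F_k|$ vertices, each of augmentation $1$). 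By the comparison theorem for projective resolutions, $\psi_k$ is then $\R\Gamma$-chain homotopic to the identity; in particular $\psi_k(c)$ is a cycle with $[\psi_k(c)]=[c]$, i.e.\ $\psi_k(c)$ is homologous to $c$. (If one prefers to remain fully explicit, the standard prism homotopy for a single right translation can be averaged to yield an explicit $H_k$ with $\partial H_k+H_k\partial=\psi_k-\id$.)

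For the lower bound I would use that projection kills the averaging: since every $\eta_i$ lies in $N$, applying the chain map $C_n(\pi)$ induced by $\pi\colon\Gamma\to\Gamma/N$ collapses each averaged term to $\overline c$, so $C_n(\pi)\circ\psi_k=C_n(\pi)$ and hence $C_n(\pi)(\psi_k(c))=\overline c$. As $C_n(\pi)$ sends basis simplices to basis simplices it does not increase the $\ell^1$-semi-norm, which gives $|\overline c|_1\le|\psi_k(c)|_1$ for every $k$.

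The upper bound is the heart of the matter. I would fix a finite representation $c=\sum_j a_j\sigma_j$ and group the indices by their (finitely many) images $\overline\tau_l$ in $\Gamma/N$, choosing a representative $\sigma_{j(l)}$ in each group, so that $\overline c=\sum_l b_l\,\overline\tau_l$ with $b_l=\sum_{\overline{\sigma_j}=\overline\tau_l}a_j$ and $|\overline c|_1=\sum_l|b_l|$. Two simplices with the same image differ, up to the diagonal action, by a right translation by some $t=(t_0,\dots,t_n)\in N^{n+1}$; writing $\sigma=[\gamma_0,\dots,\gamma_n]$ and $\sigma^t=[\gamma_0t_0,\dots,\gamma_nt_n]$ and reindexing $\zeta_i=t_i\eta_i$ rewrites $\psi_k(\sigma^t)$ as the average of $[\gamma_0\zeta_0,\dots,\gamma_n\zeta_n]$ over $\zeta\in\prod_i t_iF_k$ rather than $\prod_iF_k$, so that
\[
  \bigl|\psi_k(\sigma^t)-\psi_k(\sigma)\bigr|_1
  \le \frac{1}{|F_k|^{n+1}}\,\Bigl|\Bigl(\textstyle\prod_i t_iF_k\Bigr)\triangle\Bigl(\textstyle\prod_i F_k\Bigr)\Bigr|,
\]
and the right-hand side tends to $0$, because a telescoping estimate bounds the symmetric difference of the products by $\sum_i|t_iF_k\triangle F_k|\cdot|F_k|^{n}$, each summand being $o(|F_k|^{n+1})$ by the F\o lner property $|tF_k\triangle F_k|/|F_k|\to0$. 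This diffusion estimate lets me replace each $\psi_k(\sigma_j)$ by $\psi_k(\sigma_{j(l)})$ up to an $\ell^1$-error tending to $0$, so that $\psi_k(c)$ and $\sum_l b_l\,\psi_k(\sigma_{j(l)})$ differ by a chain of vanishing norm. Finally the chains $\psi_k(\sigma_{j(l)})$ for distinct $l$ have disjoint supports, since every simplex occurring in $\psi_k(\sigma_{j(l)})$ projects to $\overline\tau_l$, and each $\psi_k(\sigma_{j(l)})$ is an average of simplices with nonnegative coefficients and hence of $\ell^1$-norm $1$; therefore $\bigl|\sum_l b_l\,\psi_k(\sigma_{j(l)})\bigr|_1=\sum_l|b_l|=|\overline c|_1$, giving $\limsup_k|\psi_k(c)|_1\le|\overline c|_1$. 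Together with the lower bound this yields the claimed limit.

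I expect the main obstacle to be this diffusion step: controlling the symmetric difference of the product F\o lner sets and, more subtly, arranging the terms so that no $\ell^1$-cancellation is lost. Both are resolved by grouping according to the projection and then exploiting, on the one hand, the positivity of the coefficients within a single averaged simplex and, on the other, the disjointness of the supports of the averaged simplices attached to distinct simplices of $\Gamma/N$.
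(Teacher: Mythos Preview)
Your proof is correct and follows essentially the same strategy as the paper: you establish that $\psi_k$ is a $\Gamma$-equivariant chain map and invoke the fundamental lemma of homological algebra to get the homotopy to the identity (exactly the paper's Proposition~3.2/Corollary~3.3), you obtain the lower bound from $C_n(\pi)\circ\psi_k = C_n(\pi)$ (the paper's ``converse inequality''), and your diffusion estimate $|\psi_k(\sigma^t)-\psi_k(\sigma)|_1\to 0$ via the symmetric difference of translated F\o lner boxes is precisely the content of the paper's push-forward estimate (Proposition~3.4).

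There is one organisational difference worth noting. For the upper bound the paper writes $c=c_0+c_1$ with $\overline{c_0}=\overline c$, $|c_0|_1\le |\overline c|_1+\varepsilon$, and $\overline{c_1}=0$, and then simply uses $\|\psi_k\|\le 1$ on $c_0$ while applying the diffusion estimate to~$c_1$. You instead group the simplices of~$c$ by their image in~$\Gamma/N$, pick a representative in each group, and compute $\bigl|\sum_l b_l\,\psi_k(\sigma_{j(l)})\bigr|_1=|\overline c|_1$ \emph{exactly}, using that the $\psi_k(\sigma_{j(l)})$ have pairwise disjoint supports (they project to distinct $\overline\tau_l$) and that each has $\ell^1$-norm exactly~$1$. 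Your route is slightly sharper for trivial $\R$-coefficients (no $\varepsilon$ needed), while the paper's $c_0+c_1$ splitting is what carries over verbatim to a general normed module~$A$, where lifts of basis elements need not be basis elements and the coinvariant semi-norm on~$A_N$ forces the~$\varepsilon$; this is why the paper phrases Proposition~3.4 the way it does, to cover Theorem~1.3 simultaneously.
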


Standard arguments then let us derive the following statement,
which originally is a simple consequence of the mapping theorem
in bounded cohomology~\cite{vbc}:

\begin{cor}\label{cor:isometric}
  Let $\Gamma$ be a group, let $N \subset \Gamma$ be a finitely
  generated amenable normal subgroup and let $\pi \colon \Gamma
  \longrightarrow \Gamma/N$ be the canonical projection. Then the
  map~$H_*(\pi;\R) \colon H_*(\Gamma;\R) \longrightarrow
  H_*(\Gamma/N;\R)$ is isometric with respect to the
  $\ell^1$-semi-norm.
\end{cor}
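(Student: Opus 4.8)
Throughout write $\pi_* := H_n(\pi;\R)$ and fix a class $\alpha \in H_n(\Gamma;\R)$; the claim is the equality $|\pi_*\alpha|_1 = |\alpha|_1$, which I would establish as two opposite inequalities. The inequality $|\pi_*\alpha|_1 \le |\alpha|_1$ is the soft half and holds for the map induced by any group homomorphism: on chains, $C_n(\pi;\R)$ maps a basis simplex $[\gamma_0,\dots,\gamma_n]$ to the basis simplex $[\pi\gamma_0,\dots,\pi\gamma_n]$, so by the triangle inequality (with possible cancellation) it is norm non-increasing. Hence, if $c$ represents $\alpha$, then $\overline c$ represents $\pi_*\alpha$ with $|\overline c|_1 \le |c|_1$; taking the infimum over all cycles $c$ representing $\alpha$ gives the claim.

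For the reverse inequality $|\alpha|_1 \le |\pi_*\alpha|_1$ I plan to apply Theorem~\ref{mainthm} to a carefully prepared representative of $\alpha$. Fix $\varepsilon > 0$ and choose a cycle $d \in C_n(\Gamma/N;\R)$ representing $\pi_*\alpha$ with $|d|_1 \le |\pi_*\alpha|_1 + \varepsilon$. The crucial step is to lift $d$ to a cycle representing $\alpha$ whose push-forward is exactly $d$. Starting from any cycle $c_0$ representing $\alpha$, both $\overline{c_0}$ and $d$ represent $\pi_*\alpha$, so $\overline{c_0} - d = \partial e$ for some $e \in C_{n+1}(\Gamma/N;\R)$. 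Since $\pi$ is surjective, the chain map $C_*(\pi;\R)$ is surjective in every degree, so $e$ admits a lift $\tilde e \in C_{n+1}(\Gamma;\R)$; then $c := c_0 - \partial \tilde e$ is a cycle homologous to $c_0$, hence still represents $\alpha$, and a one-line computation using that $C_*(\pi;\R)$ commutes with $\partial$ gives $\overline c = d$.

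Now Theorem~\ref{mainthm} closes the argument. Fixing a F\o lner sequence $(F_k)_{k\in\N}$ for $N$, the cycles $\psi_k(c)$ are homologous to $c$ and therefore all represent $\alpha$, while $\lim_{k\to\infty} |\psi_k(c)|_1 = |\overline c|_1 = |d|_1$. Thus $|\alpha|_1 \le |\psi_k(c)|_1$ for every $k$, and passing to the limit yields $|\alpha|_1 \le |d|_1 \le |\pi_*\alpha|_1 + \varepsilon$; letting $\varepsilon \to 0$ finishes the proof.

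The substantive input is of course Theorem~\ref{mainthm}, and the only step in the present deduction requiring attention is the lifting: the theorem controls only the push-forwards $\overline c$ of representatives $c$ of $\alpha$, so to meet the infimum defining $|\pi_*\alpha|_1$ I must realize a near-optimal representative of $\pi_*\alpha$ as such a push-forward. Surjectivity of $\pi$---which is what lets me lift the bounding chain $e$---is exactly what makes this possible, and notably this half of the argument uses neither normality nor amenability of $N$; those hypotheses enter solely through Theorem~\ref{mainthm}.
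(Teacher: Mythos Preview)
Your proof is correct and follows essentially the same approach as the paper: the paper likewise handles the easy inequality by noting that the chain-level projection is norm non-increasing, and for the reverse inequality starts from an arbitrary representative~$c$ of~$\alpha$, picks near-optimal cycles~$z_m$ for~$\overline\alpha$, lifts bounding chains~$\overline b_m$ with $\partial\overline b_m = z_m - \overline c$ to chains~$b_m$ via surjectivity of~$C_*(\pi;\R)$, and then applies Theorem~\ref{mainthm} to~$c + \partial b_m$. Your $c_0$, $d$, $e$, $\tilde e$, and $c$ correspond exactly (up to a sign convention) to the paper's $c$, $z_m$, $\overline b_m$, $b_m$, and $c+\partial b_m$.
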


More explicitly, in the situation of Corollary~\ref{cor:isometric}, we 
can construct $\ell^1$-efficient cycles out of knowledge on
$\ell^1$-efficient cycles for~$H_*(\Gamma/N;\R)$ and a F\o lner
sequence of~$N$.  This procedure is explained in detail in
Section~\ref{sec:exp}.

\subsection{Twisted coefficients}

Our proof of Theorem~\ref{mainthm} and Corollary~\ref{cor:isometric}
also carries over to general normed coefficients.

\begin{thm}\label{mainthmcoeff}
  Let $\Gamma$ be a group, let $N \subset \Gamma$ be a finitely
  generated amenable normal subgroup, let $A$ be a normed $\R\Gamma$-module,
  let $n \in \N$, let $c \in C_n(\Gamma;A)$ be a cycle, and let $(F_k)_{k \in \N}$
  be a F\o lner sequence for~$N$. Then the cycles~$\psi_k(c)$ are homologous to~$c$
  and
  \[ \lim_{k \rightarrow \infty} \bigl|\psi_k(c)\bigr|_1 = |\overline c|_1.
  \]
  Here, $\overline c \in C_n(\Gamma/N;A_N)$ denotes the push-forward of~$c$ under
  the canonical projection~$\Gamma\longrightarrow \Gamma/N$ and for~$k \in \N$,
  the averaging map~$\psi_k$ is defined by
  \begin{align*}
    \psi_k \colon C_n(\Gamma;A) & \longrightarrow C_n(\Gamma;A) \\
    a \otimes (\gamma_0, \dots, \gamma_n)
    & \longmapsto
    \frac1{|F_k|^{n+1}} \cdot \sum_{\eta \in F_k^{n+1}} a \otimes (\gamma_0\cdot \eta_0, \dots, \gamma_n\cdot \eta_n).
  \end{align*}
\end{thm}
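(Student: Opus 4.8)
The plan is to follow the argument for the real-coefficient case (Theorem~\ref{mainthm}) essentially verbatim, carrying the coefficient module along at every step; the only genuinely new input concerns the behaviour of the $\ell^1$-norm on the coefficients after projecting to $A_N$. I would split the statement into the homological claim that $\psi_k(c)$ is homologous to~$c$ and the metric claim that $|\psi_k(c)|_1 \to |\overline c|_1$, and I would prove the metric claim through the two one-sided estimates $|\psi_k(c)|_1 \ge |\overline c|_1$ (for every~$k$) and $\limsup_{k\to\infty} |\psi_k(c)|_1 \le |\overline c|_1$.

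For the homological claim I would first observe that $\psi_k$ is induced by the $\Gamma$-equivariant endomorphism $\Psi_k$ of the homogeneous bar resolution $\R[\Gamma^{\bullet+1}]$ sending $(\gamma_0,\dots,\gamma_n)$ to the average of the tuples $(\gamma_0\eta_0,\dots,\gamma_n\eta_n)$ over $\eta \in F_k^{n+1}$; applying $A \otimes_{\R\Gamma} -$ recovers~$\psi_k$. A direct check of the face maps shows that $\Psi_k$ is a chain map: deleting the $i$-th vertex makes the summation over~$\eta_i$ collapse to a factor~$|F_k|$, which is exactly absorbed by the change of normalisation from $|F_k|^{n+1}$ to~$|F_k|^n$. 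Since $\Psi_k$ moreover preserves the augmentation, the fundamental lemma of homological algebra provides a $\Gamma$-equivariant chain homotopy $\Psi_k \simeq \id$; tensoring with~$A$ over~$\R\Gamma$ gives $\psi_k \simeq \id$ on $C_*(\Gamma;A)$, so in particular $\psi_k(c)$ is homologous to~$c$. The lower estimate is then formal: the canonical projection induces a norm non-increasing chain map $C_*(\Gamma;A) \to C_*(\Gamma/N;A_N)$, and since right translation by elements of~$N$ and the quotient $A \to A_N$ both become trivial after projecting, this map sends $\psi_k(c)$ to~$\overline c$ for every~$k$; hence $|\overline c|_1 \le |\psi_k(c)|_1$.

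The upper estimate is the heart of the matter and the step I expect to be hardest. Writing~$c$ in reduced form and grouping its simplices according to their image in~$\Gamma/N$, I would analyse $\psi_k(c)$ one $\Gamma/N$-class at a time. Two simplices in the same class differ, coordinate by coordinate, by an element of~$N$, so their $\psi_k$-images are normalised indicator chains supported on boxes $\prod_i \gamma_i F_k$ that are $N$-translates of one another; the Følner property forces the symmetric differences of these boxes to be $o(|F_k|^{n+1})$, so up to an $\ell^1$-error tending to~$0$ the images may be transported onto a common box, where their coefficients add. The subtlety special to the twisted case is that, once everything is rewritten with common simplex representatives, the coefficient attached to a normalised simplex is itself a Følner average of translates of the original coefficient; controlling its $A$-norm uses the amenability of~$N$ at the level of the normed module and identifies the surviving per-class contribution with the coinvariant seminorm of the summed coefficients in~$A_N$, that is, with the corresponding term of~$|\overline c|_1$.

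Summing the resulting estimates over the finitely many $\Gamma/N$-classes occurring in~$c$ should yield $\limsup_{k\to\infty} |\psi_k(c)|_1 \le |\overline c|_1$, which together with the lower estimate proves the metric claim. The main obstacle is precisely the simultaneous bookkeeping of the two limiting phenomena — the combinatorial collapse of the supports, governed by the Følner symmetric-difference estimate, and the metric collapse of the coefficients to their classes in~$A_N$ — while keeping careful track of the identifications produced when the chain $\psi_k(c)$ is reduced.
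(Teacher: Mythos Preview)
Your treatment of the homological claim and the lower estimate matches the paper's argument (Proposition~\ref{prop:avg} and Corollary~\ref{cor:avg}).

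For the upper estimate, however, your proposed mechanism for the coefficient collapse has a gap. The map~$\psi_k$ does \emph{not} translate the coefficient: every summand in $\psi_k\bigl(a\otimes(\gamma_0,\dots,\gamma_n)\bigr)$ carries the same~$a$. Translates of~$a$ by elements of~$N$ only appear when one rewrites the tuples in terms of fixed $\Gamma$-orbit representatives, and the resulting coefficient attached to a given orbit is not a F\o lner average $\tfrac{1}{|F|}\sum_{\eta\in F}\eta\cdot a$ but a partial sum of $N$-translates normalised by~$|F_k|^{n+1}$. There is no ready-made ``amenability at the level of the normed module'' principle that bounds the sum of their norms by the coinvariant seminorm~$|[a]|_{A_N}$; to get such a bound you would have to unfold the definition of the quotient seminorm, choose $b\in A$ with $[b]=[a]$ and $|b|\le|[a]|_{A_N}+\varepsilon$, write $a-b$ as a finite combination of elements $c_i-\nu_i c_i$, and run a second F\o lner estimate on the latter. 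The paper (Proposition~\ref{prop:normestimate}) avoids this detour by performing the analogous $\varepsilon$-decomposition once at the \emph{chain} level: write $c=c_0+c_1$ with $|c_0|_1\le|\overline c|_1+\varepsilon$ and $\overline{c_1}=0$; then $|\psi_k(c_0)|_1\le|c_0|_1$ since $\|\psi_k\|\le 1$, while $\overline{c_1}=0$ means $c_1$ is a finite sum $\sum_j\bigl(c(j)-c(j)\cdot\sigma(j)\bigr)$ with $\sigma(j)\in N^{n+1}$, which a single F\o lner box estimate sends to~$0$. No coefficient averaging is needed, and both your ``simplex collapse'' and ``coefficient collapse'' are handled in one stroke.
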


\begin{cor}\label{cor:isometriccoeff}
  Let $\Gamma$ be a group, let $N\subset \Gamma$ be a finitely
  generated amenable normal subgroup, let $\pi \colon \Gamma \longrightarrow \Gamma/N$
  be the canonical projection, and let $A$ be a normed~$\R \Gamma$-module. Then
  the map~$H_*(\pi;A) \colon H_*(\Gamma;A) \longrightarrow H_*(\Gamma/N;A_N)$
  is isometric with respect to the $\ell^1$-semi-norm.
\end{cor}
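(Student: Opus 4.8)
The plan is to establish the isometry by proving the two inequalities that together force equality: writing $\overline\alpha := H_*(\pi;A)(\alpha)$ for a class $\alpha \in H_n(\Gamma;A)$, I will show $|\overline\alpha|_1 \le |\alpha|_1$ and $|\alpha|_1 \le |\overline\alpha|_1$. The first is the routine direction. The chain map $C_*(\pi;A) \colon C_*(\Gamma;A) \to C_*(\Gamma/N;A_N)$ underlying $H_*(\pi;A)$ sends $a \otimes (\gamma_0,\dots,\gamma_n)$ to $[a] \otimes (\pi\gamma_0,\dots,\pi\gamma_n)$, where $[a]$ denotes the image of $a$ under the canonical projection $A \to A_N$. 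Since the norm on $A_N$ is the quotient seminorm, we have $|[a]|_{A_N} \le |a|_A$; collecting the terms that become equal in $\Gamma/N$ and applying the triangle inequality then yields $|\overline c|_1 \le |c|_1$ for every chain $c$. Applying this to cycles representing $\alpha$ and passing to the infimum gives $|\overline\alpha|_1 \le |\alpha|_1$.

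For the reverse inequality I would combine Theorem~\ref{mainthmcoeff} with a chain-level lifting argument. Fix $\varepsilon > 0$ and choose a cycle $z \in C_n(\Gamma/N;A_N)$ representing $\overline\alpha$ with $|z|_1 \le |\overline\alpha|_1 + \varepsilon$. The key step is to realize $z$ as the push-forward of a cycle representing $\alpha$: starting from any cycle $c_0$ representing $\alpha$, the chains $\overline{c_0}$ and $z$ both represent $\overline\alpha$, so $\overline{c_0} - z = \partial w$ for some $w$; because $C_*(\pi;A)$ is surjective on chains (group elements and coefficients both lift), I can lift $w$ to $\hat w \in C_{n+1}(\Gamma;A)$ with $\overline{\hat w} = w$ and set $c := c_0 - \partial \hat w$. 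Then $c$ is a cycle homologous to $c_0$, hence still represents $\alpha$, and $\overline c = \overline{c_0} - \partial w = z$ by construction. Now Theorem~\ref{mainthmcoeff} applies to $c$: each $\psi_k(c)$ is homologous to $c$ and therefore represents $\alpha$, so $|\alpha|_1 \le |\psi_k(c)|_1$ for every $k$; letting $k \to \infty$ gives $|\alpha|_1 \le \lim_{k\to\infty} |\psi_k(c)|_1 = |\overline c|_1 = |z|_1 \le |\overline\alpha|_1 + \varepsilon$. As $\varepsilon$ was arbitrary, $|\alpha|_1 \le |\overline\alpha|_1$.

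The main obstacle is precisely this reverse inequality. Applying Theorem~\ref{mainthmcoeff} to an arbitrary representative $c_0$ only yields $|\alpha|_1 \le |\overline{c_0}|_1$, and $|\overline{c_0}|_1$ need not be anywhere near $|\overline\alpha|_1$; the lifting construction above is what closes this gap, by matching any near-optimal representative of $\overline\alpha$ exactly with the push-forward of a suitable representative of $\alpha$, so that the limit furnished by the theorem can be driven down to $|\overline\alpha|_1$. I expect only two points to require genuine care, namely the chain-surjectivity of $C_*(\pi;A)$ and the estimate $|[a]|_{A_N} \le |a|_A$ for the quotient seminorm; everything else is formal manipulation of representatives already licensed by Theorem~\ref{mainthmcoeff}.
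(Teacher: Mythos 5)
Your proof is correct and takes essentially the same route as the paper: the easy inequality via the norm-nonincreasing push-forward, and the reverse inequality by lifting a filling chain so that a near-optimal representative $z$ of $\overline\alpha$ becomes the exact push-forward of a cycle representing $\alpha$, to which Theorem~\ref{mainthmcoeff} is then applied. Your modified cycle $c_0 - \partial \hat w$ is precisely the paper's $c + \partial b_m$ (up to sign conventions and the paper's use of a sequence $z_m$, $m \in \N$, in place of a fixed $\varepsilon$), so the two arguments coincide.
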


Originally, Corollary~\ref{cor:isometriccoeff} was proved by
Ivanov~\cite{ivanov} via an algebraic approach to bounded
cohomology. Again, our proof gives a recipe to construct
$\ell^1$-efficient cycles out of knowledge on~$\Gamma/N$ and a F\o
lner sequence of~$N$.

\subsection{Simplicial volume}

Simplicial volume of an oriented closed connected manifold is defined
as the $\ell^1$-semi-norm of the $\R$-fundamental class~\cite{vbc}. As
simplicial volume is related to both Riemannian geometry and mapping
degrees, one is interested in calculations of simplicial volume and
the construction of explicit $\ell^1$-efficient $\R$-fundamental cycles. 
The methods for group homology have a canonical counterpart for
aspherical spaces and hence for simplicial volume of aspherical
manifolds (Section~\ref{subsec:asph}).

The geometric F\o lner
fillings used in the context of integral foliated simplicial volume of
aspherical manifolds with amenable fundamental
group~\cite{FLPS,fauserloeh} are related to the averaging maps above
but are not quite the same. The F\o lner construction in the present
article does not use the deck transformation action of the fundamental
group, but a slightly different action by tuples on the chain
level. As a consequence, it is not clear whether the proof of
Theorem~\ref{mainthm} can be adapted to the integral foliated
simplicial volume or stable integral simplicial volume setting. The
wish to understand $\ell^1$-efficient cycles of aspherical manifolds
in the presence of amenable normal subgroups derives from
the following open problem:

\begin{question}[L\"uck]
  Let $M$ be an oriented closed connected aspherical manifold
  with the property that $\pi_1(M)$ contains an infinite amenable
  normal subgroup. Does this imply~$\|M\| = 0$\;?
\end{question}

\subsection*{Organisation of this note}

In Section~\ref{sec:l1}, we recall the $\ell^1$-semi-norm on group
homology and singular homology. Section~\ref{sec:avg} contains the
basic averaging argument and the proof of Theorems~\ref{mainthm} and
Theorem~\ref{mainthmcoeff}.  In Section~\ref{sec:exp}, we discuss how
Theorem~\ref{mainthm} can be used to give explicit $\ell^1$-efficient
cycles in group homology (which proves Corollaries~\ref{cor:isometric}
and \ref{cor:isometriccoeff}) and singular homology.

\section{The $\ell^1$-semi-norm}\label{sec:l1}

We recall the definition of the $\ell^1$-semi-norm on group homology
and singular homology.

\subsection{Normed modules and $\ell^1$-norms}\label{subsec:l1def}

Let $\Gamma$ be a group. Then a \emph{normed $\Gamma$-module} is an $\R
\Gamma$-module~$A$ together with a (semi-)norm such that the $\Gamma$-action
on~$A$ is isometric. If $X$ is a free $\Gamma$-set, then the free
$\R\Gamma$-module~$\bigoplus_X \R$ is a normed $\R
\Gamma$-module with respect to the $\ell^1$-norm associated to the
canonical $\R$-basis~$(e_x)_{x \in X}$. If $A$
is a normed $\Gamma$-module, then we define the \emph{$\ell^1$-norm}
on the $\R$-vector space~$A \otimes_\Gamma \bigoplus_X \R$ by
\begin{align*}
  |\cdot|_1 \colon 
  A \otimes_\Gamma \bigoplus_X \R & \longrightarrow \R_{\geq 0} \\
  \sum_{x\in X} a_{x} \otimes e_x
  & \longmapsto \sum_{\Gamma\cdot x \in \Gamma \setminus X} \biggl| \sum_{\gamma \in \Gamma} \gamma^{-1} \cdot a_{\gamma \cdot x}\biggr|.
\end{align*}
Here, in order to define the tensor product~$A \otimes_\Gamma
\bigoplus_X \R$, we implicitly converted the (left)
$\R\Gamma$-module~$A$ to a right $\R\Gamma$-module via the involution
on the group ring~$\R \Gamma$ given by inversion of group elements.

\begin{rem}[rational coefficients]
  All of our arguments also work in the case of normed $\Q\Gamma$-modules;
  however, for the sake of simplicity, we will formulate everything in
  the more standard case of normed $\R\Gamma$-modules. 
\end{rem}

\subsection{The $\ell^1$-semi-norm: groups}

Let $\Gamma$ be a group. We write~$C_*(\Gamma)$ for the simplicial
$\R\Gamma$-resolution of~$\R$. For~$n \in \N$, we consider the
\emph{$\ell^1$-norm~$|\cdot|_1$} on~$C_n(\Gamma) =
\bigoplus_{\Gamma^{n+1}}\R$, where $\Gamma$ acts diagonally
on~$\Gamma^{n+1}$. If $A$ is a normed $\R\Gamma$-module, then
$C_n(\Gamma;A) := A \otimes_\Gamma C_n(\Gamma)$ carries a natural
$\ell^1$-norm, as described in Section~\ref{subsec:l1def}.  This norm
induces the \emph{$\ell^1$-semi-norm} on group homology~$H_n(\Gamma;A) :=
H_n(A \otimes_\Gamma C_*(\Gamma))$:
\begin{align*}
  \|\cdot\|_1 \colon H_n(\Gamma;A) & \longrightarrow \R_{\geq 0} \\
  \alpha & \longmapsto \inf\bigl\{ |c|_1 \bigm| c \in C_n(\Gamma;A),\ \partial c = 0, [c] = \alpha\bigr\}.
\end{align*}

If $N \subset \Gamma$ is a normal subgroup of~$\Gamma$ and $A$ is a
normed $\Gamma$-module, then the $N$-coinvariants~$A_N$ of~$A$ form a
normed $\Gamma/N$-module (with the quotient semi-norm), and
the canonical projections~$\pi \colon \Gamma \longrightarrow \Gamma/N$
and $A \longrightarrow A_N$ induce a well-defined chain
map~$C_*(\pi;A)\colon C_*(\Gamma;A) \longrightarrow
C_*(\Gamma/N;A_N)$. In particular, we obtain an induced
homomorphism~$H_*(\pi;A) \colon H_*(\Gamma;A) \longrightarrow
H_*(\Gamma/N;A_N)$. These are the maps occuring in
Theorem~\ref{mainthmcoeff} and Corollary~\ref{cor:isometriccoeff}.

\subsection{The $\ell^1$-semi-norm: spaces}

Let $M$ be a path-connected topological space that admits a universal
covering~$\ucov M$ with deck transformation action by the fundamental
group~$\Gamma := \pi_1(M)$ (e.g., $M$ could be a connected manifold or
CW-complex). If $n \in \N$, then the singular chain module~$C_n(\ucov
M;\R)$ is a free $\R\Gamma$-space, as witnessed by the
$\R$-basis~$\map(\Delta^n,\ucov M)$, which is a free $\Gamma$-set with
respect to the deck transformation action. Hence, $C_n(\ucov M;\R)$ is
a normed $\Gamma$-module with respect to the $\ell^1$-norm. If $A$ is
a normed $\Gamma$-module, then the twisted chain module~$C_n(M;A) := A
\otimes_\Gamma C_n(\ucov M;\R)$ inherits an $\ell^1$-norm (see
Section~\ref{subsec:l1def}). In the case that $A$ is trivial
$\Gamma$-module~$\R$, this description coincides with the classical
$\ell^1$-norm on~$C_n(M;\R)$. The semi-norm on~$H_*(M;A)$ induced by
the $\ell^1$-norm on~$C_*(M;A)$ is the \emph{$\ell^1$-semi-norm on
  singular homology}. If $M$ is an oriented closed connected $n$-manifold,
then
\[ \| M\| := \bigl\| [M]_\R \bigr\|_1 
\]
is Gromov's \emph{simplicial volume of~$M$}~\cite{vbc,mapsimvol}, where $[M]_\R
\in H_{\dim M}(M;\R)$ denotes the $\R$-fundamental class of~$M$.

In the aspherical case, this $\ell^1$-semi-norm on homology coincides
with the $\ell^1$-semi-norm on group homology~\cite{vbc,ivanov}. More
explicitly:

\begin{prop}\label{prop:comp}
  Let $M$ be a path-connected topological space that admits a weakly
  contractible universal covering with deck transformation action by
  the fundamental group~$\Gamma := \pi_1(M)$, and let $D \subset \ucov M$
  be a set-theoretic fundamental domain of the deck transformation action. Then
  the $\Gamma$-equivariant chain map~$C_*(\ucov M;\R) \longrightarrow C_*(\Gamma)$
  given by
  \begin{align*}
    C_n(\ucov M;\R) & \longrightarrow C_n(\Gamma;\R) \\
    \sigma & \longmapsto (\gamma_0, \dots, \gamma_n) \text{ with $\sigma(e_j) \in \gamma_j \cdot D$}
  \end{align*}
  admits a $\Gamma$-equivariant chain homotopy inverse (given by
  inductive filling of simplices). In particular, if $A$ is a normed
  $\Gamma$-module, then these chain maps induce mutually inverse
  isometric isomorphisms~$H_*(M;A) \cong H_*(\Gamma;A)$. \qed
\end{prop}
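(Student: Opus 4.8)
The plan is to recognise both chain complexes as free $\R\Gamma$-resolutions of the trivial module~$\R$, invoke the fundamental lemma of homological algebra for the chain homotopy equivalence, and then track generators carefully enough to read off the isometry. First I would check that the stated map~$g\colon C_*(\ucov M;\R)\longrightarrow C_*(\Gamma)$ is well defined: since $D$ is a set-theoretic fundamental domain, every point of~$\ucov M$ lies in exactly one translate~$\gamma\cdot D$, so the group elements~$\gamma_0,\dots,\gamma_n$ attached to the vertices~$\sigma(e_0),\dots,\sigma(e_n)$ are unambiguous. Equivariance is immediate from $(\delta\cdot\sigma)(e_j)=\delta\cdot\sigma(e_j)\in\delta\gamma_j\cdot D$, and the chain map property holds because deleting the $i$-th vertex of~$\sigma$ deletes~$\gamma_i$ from the tuple. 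In degree~$0$ a point maps to a single group element, so $g$ is compatible with the augmentations and hence lifts~$\id_\R$.

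Next I would observe that $C_*(\ucov M;\R)$ consists of free $\R\Gamma$-modules, since the deck action on~$\map(\Delta^n,\ucov M)$ is free, and that weak contractibility forces, via Hurewicz, the reduced homology of~$\ucov M$ to vanish; hence $C_*(\ucov M;\R)\to\R$ is a free resolution, as is the simplicial resolution~$C_*(\Gamma)\to\R$. By the comparison theorem for projective resolutions, $g$ admits a $\Gamma$-equivariant chain homotopy inverse, and any two $\Gamma$-equivariant lifts of~$\id_\R$ are $\Gamma$-equivariantly chain homotopic, so both composites are chain homotopic to the respective identities.

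For the norm statement I would construct an inverse~$f\colon C_*(\Gamma)\longrightarrow C_*(\ucov M;\R)$ explicitly, assigning to each tuple a \emph{single} singular simplex. Fixing a basepoint~$x_0\in D$, I would build $\Gamma$-equivariant assignments~$\tau_{(\gamma_0,\dots,\gamma_n)}$ with $j$-th vertex at~$\gamma_j\cdot x_0$ and with $i$-th face~$\tau_{(\gamma_0,\dots,\widehat{\gamma_i},\dots,\gamma_n)}$, by induction on~$n$. The inductive step chooses, on each diagonal orbit (represented by tuples with $\gamma_0=e$), a filling of the already-assembled boundary map~$\partial\Delta^n\cong S^{n-1}\to\ucov M$; such a filling exists precisely because $\pi_{n-1}(\ucov M)=0$. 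This is the inductive filling of simplices, and I expect the main obstacle to be exactly here: one must verify that the previously chosen faces glue to a genuine map on~$\partial\Delta^n$ and that weak contractibility is invoked in the correct degree, which is where the hypothesis does the real work.

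Finally I would deduce the isometry. Both $g$ and $f$ send each free generator (a singular simplex, respectively a tuple) to a single free generator and are $\Gamma$-equivariant; an equivariant map of free $\Gamma$-sets restricts to a bijection on each orbit, so after applying~$A\otimes_\Gamma-$ it acts as the identity~$A\to A$ on each orbit summand, with possible collisions of target orbits, and is therefore norm non-increasing by the triangle inequality. Hence $g_*$ and $f_*$ are norm non-increasing on homology. Since they are mutually inverse isomorphisms, the chain of inequalities $\|\alpha\|_1=\|f_*g_*\alpha\|_1\le\|g_*\alpha\|_1\le\|\alpha\|_1$ forces equality throughout, so $g_*$ and its inverse are mutually inverse isometric isomorphisms~$H_*(M;A)\cong H_*(\Gamma;A)$.
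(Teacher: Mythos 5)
Your proposal is correct and follows precisely the route the paper intends: the paper states this proposition without proof (the \qed marks it as standard), indicating only that both complexes are compared as free $\R\Gamma$-resolutions and that the homotopy inverse comes from inductive filling of simplices, which is exactly your construction via the fundamental lemma of homological algebra and vanishing of $\pi_{n-1}(\ucov M)$. Your final step --- that both chain maps send free generators to single free generators, hence have norm at most $1$ after applying $A \otimes_\Gamma -$, so that the mutually inverse maps on homology are forced to be isometric --- is the intended standard argument and is carried out correctly.
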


Actually, even more is known to be true: The classifying map~$M
\longrightarrow B\Gamma$ is known to induce an isometric map on
singular homology with respect to the $\ell^1$-semi-norm~\cite{vbc,ivanov}.

\subsection{Survey of known explicit efficient cycles}

Explicit constructions of $\ell^1$-efficient cycles are known for
spheres and tori~\cite{vbc}, cross-product classes (through the
homological cross-product and the duality principle)~\cite{vbc},
finite coverings (through lifts of cycles)~\cite{vbc,mapsimvol}, hyperbolic
manifolds (through straightening/smearing)~\cite{benedettipetronio}, manifolds with non-trivial
$S^1$-action (through an inductive filling/wrapping
construction)~\cite{yano}, spaces with amenable fundamental group
(through diffusion or F\o lner filling)~\cite{vbc,alpertkatz,fauserloeh},
and for some amenable glueings~\cite{vbc,kuessner}.

\section{Averaging chains}\label{sec:avg}

\subsection{Finite normal subgroups}

We will start by giving some context. Namely, we will first recall
the case of finite normal subgroups; we will then explain how F\o lner
sequences allow to pass from the finite case to the amenable case. 
We therefore recall the following well-known statement (whose proof
is a straightforward calculation):

\begin{prop}
  Let $\Gamma$ be a group, let $N \subset \Gamma$ be a finite normal
  subgroup, and let $\pi \colon \Gamma \longrightarrow \Gamma/N$ be
  the projection map. Then
  \begin{align*}
    C_n(\Gamma/N;\R) & \longrightarrow C_n(\Gamma;\R) \\
    [\gamma_0 \cdot N, \dots, \gamma_n\cdot N]
    & \longmapsto
    \frac1{|N|^{n+1}} \cdot
    \sum_{\eta \in N^{n+1}} [\gamma_0\cdot \eta_0, \dots, \gamma_n \cdot \eta_n]
  \end{align*}
  defines a well-defined chain map~$C_*(\Gamma/N;\R) \longrightarrow
  C_*(\Gamma;\R)$ that induces the inverse of~$H_*(\pi;\R) \colon
  H_*(\Gamma;\R) \longrightarrow H_*(\Gamma/N;\R)$ on homology.
\end{prop}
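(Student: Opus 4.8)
The plan is to verify the three assertions in turn — that the formula yields a well-defined map, that it is a chain map, and that it inverts $H_*(\pi;\R)$ on homology. Write $s$ for the proposed map. For well-definedness I would check independence of the chosen representatives: replacing $\gamma_j$ by $\gamma_j\cdot m_j$ with $m_j\in N$ changes the $j$-th entry of each summand from $\gamma_j\cdot\eta_j$ to $\gamma_j\cdot m_j\cdot\eta_j$, and since left translation by $m_j$ permutes $N$, the substitution $\eta_j'=m_j\cdot\eta_j$ shows the sum over $\eta\in N^{n+1}$ is unchanged. Compatibility with the simultaneous diagonal $\Gamma/N$-translation is automatic, because source and target are the respective coinvariant complexes, on which diagonal translation acts trivially. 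Finiteness of $N$ is what makes the sum and the normalisation $1/|N|^{n+1}$ meaningful, and normality is what provides the quotient $\Gamma/N$ and the target complex $C_*(\Gamma/N;\R)$; extending $\R$-linearly then gives the map on all of $C_n(\Gamma/N;\R)$.

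Next I would confirm $\partial\circ s = s\circ\partial$. Applying the simplicial boundary $\partial[\gamma_0,\dots,\gamma_n]=\sum_{i=0}^n(-1)^i[\gamma_0,\dots,\widehat{\gamma_i},\dots,\gamma_n]$ to $s([\gamma_0 N,\dots,\gamma_n N])$, the $i$-th summand no longer depends on $\eta_i$, so summing over $\eta_i\in N$ produces a factor $|N|$; together with the prefactor $1/|N|^{n+1}$ this gives exactly the prefactor $1/|N|^{n}$ appropriate in degree $n-1$, matching $s(\partial[\gamma_0 N,\dots,\gamma_n N])$ term by term. This is the ``straightforward calculation'' mentioned in the statement.

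For the homological claim I would first observe that one composition is already the identity at chain level: since $\gamma_j\cdot\eta_j\cdot N=\gamma_j\cdot N$ for every $\eta_j\in N$, one gets $C_*(\pi;\R)\circ s([\gamma_0 N,\dots,\gamma_n N]) = |N|^{-(n+1)}\sum_{\eta\in N^{n+1}}[\gamma_0 N,\dots,\gamma_n N] = [\gamma_0 N,\dots,\gamma_n N]$, so $C_*(\pi;\R)\circ s=\id$ and hence $H_*(\pi;\R)\circ H_*(s)=\id$. Thus $H_*(s)$ is at least a right inverse of $H_*(\pi;\R)$.

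The main obstacle is the other direction, $H_*(s)\circ H_*(\pi;\R)=\id$: here $s\circ C_*(\pi;\R)$ is genuinely not the identity on chains, since it averages each vertex independently over $N$, so one cannot argue naively. I would settle it by invoking the classical fact that for a finite normal subgroup $H_*(\pi;\R)$ is itself an isomorphism — the homology of the finite group $N$ with real coefficients vanishes in positive degrees by the transfer/averaging argument, whence the Lyndon--Hochschild--Serre spectral sequence of $1\to N\to\Gamma\to\Gamma/N\to 1$ collapses and identifies $H_*(\Gamma;\R)$ with $H_*(\Gamma/N;\R)$ via $\pi$. Once $H_*(\pi;\R)$ is known to be invertible, its right inverse $H_*(s)$ is automatically its two-sided inverse, which is the claim. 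Alternatively, and fully self-contained, one can exhibit an explicit $N$-averaging chain homotopy between $s\circ C_*(\pi;\R)$ and $\id$; this is the finite-group shadow of the F\o lner estimate carried out for the amenable case in Theorem~\ref{mainthm}, and it is the only step requiring more than bookkeeping.
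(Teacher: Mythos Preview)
Your argument is correct. The paper itself omits the proof entirely (it only remarks that it is a ``straightforward calculation''), so there is no detailed comparison to make; your verification of well-definedness and of the chain map property is exactly the sort of bookkeeping intended, and your observation that $C_*(\pi;\R)\circ s=\id$ on the nose is the clean half of the homological claim.

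For the other composition you reach for the Lyndon--Hochschild--Serre spectral sequence to establish that $H_*(\pi;\R)$ is invertible, and only mention the direct chain homotopy as an afterthought. That works, but it is somewhat at odds with the elementary and explicit spirit of the note. Observe that $s\circ C_*(\pi;\R)$ is literally the map~$\id_\R\otimes_\Gamma\psi^N_*$ with~$F=N$ from Proposition~\ref{prop:avg}, and the paper's own argument there---that $\psi^N_*$ is a $\Gamma$-equivariant chain map extending~$\id_\R$, hence $\Gamma$-equivariantly chain homotopic to the identity by the fundamental lemma of homological algebra---immediately gives $H_*(s)\circ H_*(\pi;\R)=\id$ without invoking any spectral sequence. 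That is almost certainly the ``straightforward calculation'' the paper has in mind, and it stays closer to the toolkit used throughout.
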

  
\subsection{Averaging maps}

In order to pass from the case of finite normal subgroups~$N$ to
larger amenable normal subgroups, we replace the averaging over
the subgroup~$N$ by averaging over F\o lner sets.  
If the normal subgroup~$N \subset \Gamma$ is infinite, in general,
there does not exist a splitting of the homomorphismus~$H_*(\Gamma;\R)
\longrightarrow H_*(\Gamma/N;\R)$ induced by the canonical
projection~$\Gamma \longrightarrow \Gamma/N$ and if $F \subset
N$ is finite, 
\begin{align*}
  C_n(\Gamma/N;\R) & \longrightarrow C_n(\Gamma;\R) \\
  [\gamma_0 \cdot N, \dots, \gamma_n \cdot N]
  & \longmapsto
  \frac1{|F|^{n+1}} \cdot \sum_{\eta \in F^{n+1}}
  [\gamma_0\cdot \eta_0, \dots, \gamma_n \cdot \eta_n] 
\end{align*}
in general will \emph{not} produce a well-defined map. However, the
following ``round-trip'' averaging map over finite subsets is always
well-defined.

\begin{prop}[averaging over a finite subset]\label{prop:avg}
  Let $\Gamma$ be a group and let $F \subset \Gamma$ be a non-empty finite
  subset. Then the map~$\psi^F_* \colon C_*(\Gamma) \longrightarrow
  C_*(\Gamma)$ defined by
  \begin{align*}
    \psi^F_n \colon C_n(\Gamma)& \longrightarrow C_n(\Gamma) \\
    (\gamma_0,\dots,\gamma_n) & \longmapsto
    \frac1{|F|^{n+1}} \cdot \sum_{\eta \in F^{n+1}} (\gamma_0 \cdot \eta_0, \dots,
    \gamma_n \cdot \eta_n)
  \end{align*}
  has the following properties:
  \begin{enumerate}
    \item The map~$\psi^F_*$ is $\Gamma$-equivariant.
    \item The map~$\psi^F_*$ is a chain map.
    \item The chain map~$\psi^F_*$ is $\Gamma$-equivariantly
      chain homotopic to the identity.
    \item We have~$\|\psi^F_*\| \leq 1$ (with respect to the
      $\ell^1$-norm).
  \end{enumerate}
\end{prop}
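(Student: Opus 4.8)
The plan is to verify the four properties in order, since each is a direct calculation on the generators $(\gamma_0,\dots,\gamma_n)$ of the free $\R\Gamma$-module $C_n(\Gamma)$. I would set up notation: write $\eta = (\eta_0,\dots,\eta_n) \in F^{n+1}$ and abbreviate $\gamma\cdot\eta := (\gamma_0\eta_0,\dots,\gamma_n\eta_n)$.

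For property~(1), $\Gamma$-equivariance, I would take $\delta \in \Gamma$ acting diagonally and check that $\psi^F_n\bigl(\delta\cdot(\gamma_0,\dots,\gamma_n)\bigr) = \psi^F_n\bigl((\delta\gamma_0,\dots,\delta\gamma_n)\bigr)$ equals $\delta\cdot\psi^F_n\bigl((\gamma_0,\dots,\gamma_n)\bigr)$. This is immediate because $(\delta\gamma_j)\eta_j = \delta(\gamma_j\eta_j)$ and the diagonal $\Gamma$-action commutes with the averaging sum over $F^{n+1}$, so $\delta$ simply pulls out of each summand.

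For property~(2), that $\psi^F_*$ is a chain map, I would compute $\partial_n \psi^F_n$ and $\psi^F_{n-1}\partial_n$ and compare. Applying the simplicial boundary $\partial_n = \sum_{i=0}^n (-1)^i \partial_i$ (deleting the $i$-th entry) to each summand $\gamma\cdot\eta$ produces, after deleting coordinate $i$, a term indexed by the $F^n$-tuple $\eta$ with its $i$-th entry removed. The key observation is that in $\sum_{\eta\in F^{n+1}}\partial_i(\gamma\cdot\eta)$, the deleted coordinate $\eta_i$ ranges freely over all of $F$, contributing exactly a factor of $|F|$; since $\psi^F_n$ carries the normalisation $1/|F|^{n+1}$ and $\psi^F_{n-1}$ carries $1/|F|^n$, this factor of $|F|$ is precisely what makes the two normalisations match. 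I would present this as the one line where the $|F|^{n+1}$ denominator is seen to be the right choice.

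For property~(3), $\Gamma$-equivariant chain homotopy to the identity, I would invoke the standard fact that any two $\Gamma$-equivariant augmentation-preserving chain maps between projective (here free) resolutions of~$\R$ are $\Gamma$-equivariantly chain homotopic; since $C_*(\Gamma)$ is a free $\R\Gamma$-resolution of~$\R$ and both $\psi^F_*$ and $\id$ cover $\id_\R$ on augmentation, the fundamental lemma of homological algebra supplies the homotopy, so I need only check that $\psi^F_*$ preserves the augmentation, which holds because each $\psi^F_n$ sends a generator to an average of $|F|^{n+1}$ generators with total coefficient~$1$. I expect property~(3) to be the only place requiring a genuine appeal to general theory rather than bare computation, and hence the main obstacle in the sense that one must correctly identify the augmentation-compatibility rather than attempt to write a homotopy by hand. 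Finally, for property~(4), the norm bound $\|\psi^F_*\| \le 1$ follows from the triangle inequality: $\psi^F_n$ sends each basis element to a convex combination (coefficients $1/|F|^{n+1}$ summing to~$1$) of basis elements, so $\lone{\psi^F_n(\gamma_0,\dots,\gamma_n)} \le 1 = \lone{(\gamma_0,\dots,\gamma_n)}$, and the bound on generators extends to all chains by linearity.
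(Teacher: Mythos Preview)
Your proposal is correct and follows essentially the same route as the paper: equivariance from the commutation of left and right multiplication, the chain-map identity from the reindexing that absorbs the extra factor of~$|F|$, the chain homotopy from the fundamental lemma applied to the free resolution~$C_*(\Gamma)$ with $\psi^F_*$ extending~$\id_\R$, and the norm bound from the convex-combination form of the image. The paper is terser but makes exactly these four observations in the same order of ideas.
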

\begin{proof}
  Because left and right multiplication on~$\Gamma$ do not interfere
  with each other, the map~$\psi_*^F$ is
  $\Gamma$-equivariant. Moreover, by definition of the $\ell^1$-norm,
  we clearly have~$\|\psi^F_*\| \leq 1$.

  We now show that $\psi^F_*$ is a chain map: Let $n \in \N$ and
  $(\gamma_0, \dots, \gamma_n) \in C_n(\Gamma)$. By construction,
  we have
  \begin{align*}
    & \phantom{=}\ 
    \partial\psi^F_n\bigl((\gamma_0, \dots, \gamma_n)\bigr)
    \\
    & = \sum_{j=0}^n (-1)^j \cdot \frac1{|F|^{n+1}}
    \cdot \sum_{\eta \in F^{n+1}} (\gamma_0 \cdot \eta_0, \dots, \gamma_{j-1}\cdot \eta_{j-1}, \gamma_{j+1} \cdot \eta_{j+1}, \dots, \gamma_n \cdot \eta_n)
  \end{align*}
  and reindexing shows (the last summand does not depend on~``$\eta_j$'') 
  \begin{align*}
    & \phantom{=}\ 
    \partial\psi^F_n\bigl((\gamma_0, \dots, \gamma_n)\bigr)
    \\
    & = \sum_{j=0}^n (-1)^j \cdot \frac{|F|}{|F|^{n+1}}
    \cdot \sum_{\eta \in F^{n}} (\gamma_0 \cdot \eta_0, \dots, \gamma_{j-1}\cdot \eta_{j-1}, \gamma_{j+1} \cdot \eta_{j}, \dots, \gamma_n \cdot \eta_{n-1})   
    \\
    & = \frac1{|F|^n} \cdot \sum_{\eta \in F^n} \sum_{j=0}^n
    (-1)^j \cdot (\gamma_0 \cdot \eta_0, \dots, \gamma_{j-1} \cdot \eta_{j-1}, \gamma_{j+1}\cdot \eta_j, \dots, \gamma_n \cdot \eta_{n-1})
    \\
    & =  
    \psi^F_n\bigl(\partial(\gamma_0, \dots, \gamma_n))\bigr).
  \end{align*}
  
  It remains to prove the third part: Because $C_*(\Gamma)$ is a free
  $\R\Gamma$-resolution of the trivial $\Gamma$-module~$\R$ and
  because the chain map~$\psi_*^F$ extends the identity~$\id_\R \colon
  \R \longrightarrow\R$ of the resolved module, the fundamental lemma
  of homological algebra immediately implies that $\psi^F_*$ is
  $\Gamma$-equivariantly chain homotopic to the identity.  
\end{proof}

Passing to the tensor product, we obtain corresponding chain
maps on the standard complexes: 

\begin{cor}\label{cor:avg}
  Let $\Gamma$ be a group, let $A$ be a normed $\R \Gamma$-module, and
  let $F \subset \Gamma$ be a non-empty finite subset. Then 
  $\id_A \otimes_\Gamma \psi_*^F \colon C_*(\Gamma;A)
     \longrightarrow C_*(\Gamma;A)
  $
  is a well-defined chain map that is chain homotopic to the identity
  and that satisfies
  $\| \id_A \otimes_\Gamma \psi^F_* \| \leq 1.
  $ \qed
\end{cor}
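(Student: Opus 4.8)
The plan is to obtain all three assertions by applying the functor $\id_A \otimes_\Gamma (-)$ to the corresponding statements of Proposition~\ref{prop:avg}. First I would observe that, since $\psi^F_*$ is $\Gamma$-equivariant (part~(1)), the induced map $\id_A \otimes_\Gamma \psi^F_*$ on $C_*(\Gamma;A) = A \otimes_\Gamma C_*(\Gamma)$ is well defined: $\Gamma$-equivariance of the second tensor factor is precisely what guarantees compatibility with the relations defining $\otimes_\Gamma$. That it is a chain map is then immediate from functoriality, because the boundary operator on $C_*(\Gamma;A)$ is by definition $\id_A \otimes_\Gamma \partial$ and $\psi^F_*$ commutes with $\partial$ by part~(2).

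For the chain homotopy I would take the $\Gamma$-equivariant chain homotopy $h$ provided by part~(3), satisfying $\partial h + h \partial = \psi^F_* - \id$. Its $\Gamma$-equivariance again makes $\id_A \otimes_\Gamma h$ well defined, and applying the additive functor $\id_A \otimes_\Gamma (-)$ to this identity yields $\partial\,(\id_A \otimes_\Gamma h) + (\id_A \otimes_\Gamma h)\,\partial = \id_A \otimes_\Gamma \psi^F_* - \id$, so that $\id_A \otimes_\Gamma \psi^F_*$ is chain homotopic to the identity.

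The norm estimate is the only substantial point, and I expect it to be the main obstacle. I would use the description of the $\ell^1$-norm on $A \otimes_\Gamma \bigoplus_X \R$ from Section~\ref{subsec:l1def}: since $X = \Gamma^{n+1}$ is a \emph{free} $\Gamma$-set, choosing a set $R$ of orbit representatives lets me write any element uniquely as $z = \sum_{r \in R} a_r \otimes e_r$, with $|z|_1 = \sum_{r \in R} |a_r|$. Writing $\psi^F_n(e_r) = \sum_y c_{r,y}\, e_y$, part~(4) gives $\sum_y |c_{r,y}| = |\psi^F_n(e_r)|_1 \leq 1$. The key step is then to re-express each $a_r \otimes e_y$ in terms of orbit representatives by moving the group element across the tensor, using the relation $a \otimes \gamma m = \gamma^{-1} a \otimes m$ induced by the inversion involution, to collect the contributions to each representative and estimate the resulting $A$-norms by the triangle inequality together with the fact that the $\Gamma$-action on $A$ is isometric. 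This yields $\bigl| (\id_A \otimes_\Gamma \psi^F_n)(z) \bigr|_1 \leq \sum_{r} |a_r| \sum_y |c_{r,y}| \leq \sum_{r} |a_r| = |z|_1$, that is, $\|\id_A \otimes_\Gamma \psi^F_*\| \leq 1$. The genuinely nontrivial part of the argument is the careful bookkeeping of orbit representatives and of the inversion involution on~$A$; once the isometry of the $\Gamma$-action is used, the estimate collapses to the scalar bound $\sum_y |c_{r,y}| \leq 1$.
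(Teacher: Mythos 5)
Your proposal is correct and follows exactly the route the paper intends: the paper states Corollary~\ref{cor:avg} with a \qed, treating it as immediate from Proposition~\ref{prop:avg} by ``passing to the tensor product,'' which is precisely your application of the functor $\id_A \otimes_\Gamma (-)$ to parts~(1)--(4). Your careful bookkeeping for the norm estimate (orbit representatives for the free $\Gamma$-set $\Gamma^{n+1}$, moving group elements across the tensor via the inversion involution, and using isometry of the $\Gamma$-action on~$A$) correctly fills in the details that the paper regards as following directly from the definition of the $\ell^1$-norm in Section~\ref{subsec:l1def}.
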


\subsection{The push-forward estimate}

As final ingredient, we estimate the norm of averaged chains in terms
of the image in the quotient group. For subsets~$F,S \subset N$ of a group~$N$, we use the
version
\[ \partial_S F := \{ \eta \in F \mid \exists_{\sigma \in S \cup S^{-1}}\ \sigma \cdot\eta \not \in F\}
\]
of the \emph{$S$-boundary of~$F$ in~$N$}.

\begin{prop}[the push-forward estimate]\label{prop:normestimate}
  Let $\Gamma$ be a group, let $A$ be a normed $\R \Gamma$-module, let
  $N \subset \Gamma$ be a normal subgroup, and let $c \in C_n(\Gamma;A)$.
  For every~$\varepsilon \in \R_{>0}$ there exists a finite set~$S \subset N$
  and a constant~$K \in \R_{>0}$ wit the following property: For all non-empty finite
  subsets~$F \subset N$ we have
  \[ \bigl| (\id_A \otimes_\Gamma \psi^F_n)(c)\bigr|_1
     \leq |\overline c|_1 + \varepsilon + K \cdot \frac{|\partial_S F|}{|F|} 
  \]
  Here, $\overline c \in C_*(\Gamma/N;A_N)$ denotes the push-forward of~$c$ along the projection~$\Gamma \longrightarrow \Gamma/N$.
\end{prop}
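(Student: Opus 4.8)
The plan is to reduce the statement to a one-dimensional averaging estimate in the coefficient module~$A$ and then to absorb the loss coming from the simplex coordinates into the Følner boundary. First I would write $c$ in reduced form $c = \sum_\omega a_\omega \otimes (e,\omega_1,\dots,\omega_n)$, where $\omega = (\omega_1,\dots,\omega_n)$ runs over the finitely many diagonal-orbit representatives of $\Gamma^{n+1}$ with $a_\omega \neq 0$; then $|c|_1 = \sum_\omega |a_\omega|$. Writing $\overline\omega := (\overline{\omega_1},\dots,\overline{\omega_n})$ and, for each class $\overline\omega$ in the finite image in $(\Gamma/N)^n$, abbreviating $a^{\overline\omega} := \sum_{\omega \mapsto \overline\omega} a_\omega \in A$, normality of~$N$ gives $\overline c = \sum_{\overline\omega} \overline{a^{\overline\omega}} \otimes (\overline e,\overline\omega)$, and the $\ell^1$-formula on orbit representatives yields $|\overline c|_1 = \sum_{\overline\omega} |\overline{a^{\overline\omega}}|$.

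The first key step is a purely module-theoretic averaging estimate: for $a \in A$ and $\varepsilon > 0$ there is a finite set $S_0 \subset N$ and a constant $K_0$ so that for every non-empty finite $G \subset N$
\[
  \left| \frac{1}{|G|}\sum_{\eta \in G} \eta^{-1}\cdot a \right|
    \le |\overline a| + \varepsilon + K_0 \cdot \frac{|\partial_{S_0} G|}{|G|}.
\]
Indeed, the quotient seminorm on~$A_N$ lets us pick $w = \sum_l (\nu_l \cdot b_l - b_l)$ with $\nu_l \in N$ and $|a - w| \le |\overline a| + \varepsilon$; averaging~$a$ is averaging $a-w$ (whose average has norm $\le |a-w|$ by isometry of the action) plus averaging~$w$, and the latter telescopes over the symmetric differences $\nu_l^{-1} G \triangle G$, each bounded by $|\partial_{\{\nu_l\}} G|$. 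This produces $S_0 = \{\nu_l\}$ and $K_0 = 2\sum_l |b_l|$.

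Next I would compute $(\id_A \otimes_\Gamma \psi^F_n)(c)$ in reduced form: shifting by $\eta \in F^{n+1}$ and renormalising the zeroth coordinate by $\eta_0^{-1}$ rewrites it as $\frac1{|F|^{n+1}}\sum_\omega \sum_{\eta \in F^{n+1}} (\eta_0^{-1}\cdot a_\omega) \otimes (e, \eta_0^{-1}\omega_1\eta_1, \dots, \eta_0^{-1}\omega_n\eta_n)$, which I group by orbit representative~$\tau$ with $\tau_i = \eta_0^{-1}\omega_i\eta_i$. Since $\eta_i \in N$, one has $\overline\tau = \overline\omega$, so only lifts~$\omega$ of a fixed~$\overline\tau$ contribute to~$\tau$; for each such lift the constraint $\eta_i = \omega_i^{-1}\eta_0\tau_i \in F$ turns~$\eta$ into a function of~$\eta_0$ alone, so its contribution is indexed by $\eta_0 \in G_\tau^\omega := \{\eta_0 \in F : \omega_i^{-1}\eta_0\tau_i \in F \ \forall i\}$, always with coefficient direction~$\eta_0^{-1}$. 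Fixing, for each class~$\overline\omega$, a reference lift with set $G_\tau := G_\tau^{\omega^{(1)}}$, I split the coefficient of~$\tau$ into a main part $\frac1{|F|^{n+1}}\sum_{\eta_0 \in G_\tau} \eta_0^{-1}\cdot a^{\overline\tau}$ plus a defect supported on the symmetric differences $G_\tau^\omega \triangle G_\tau$. Applying the module estimate fibrewise to the main part and using that the fibres of $\eta \mapsto \tau$ partition~$F^{n+1}$, i.e.\ $\sum_{\tau \mapsto \overline\omega} |G_\tau| = |F|^{n+1}$, yields the main term $\sum_{\overline\omega} (|\overline{a^{\overline\omega}}| + \varepsilon)$; as only finitely many classes occur, rescaling~$\varepsilon$ turns this into $|\overline c|_1 + \varepsilon$.

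It remains to bound the total defect --- the fibrewise boundaries $\partial_{S_0} G_\tau$ from the module estimate together with the lift-mismatch symmetric differences $G_\tau^\omega \triangle G_\tau$ --- by $C \cdot |\partial_S F| \cdot |F|^n$, after which division by~$|F|^{n+1}$ gives the asserted term $K \cdot |\partial_S F|/|F|$. This is the step I expect to be the main obstacle. The subtlety is that membership in~$G_\tau$ couples~$\eta_0$ to~$\tau$ through the simplex coordinates: $\eta_0 \in \partial_{S_0} G_\tau$ forces $\eta_0 \in \partial_{S_0} F$ or, writing $\zeta_i = \omega_i^{-1}\eta_0\tau_i \in F$, some $(\omega_i^{-1}\sigma\omega_i)\zeta_i \notin F$, a boundary condition for the conjugate $\omega_i^{-1}\sigma\omega_i \in N$; likewise $G_\tau^\omega \triangle G_\tau$ is governed by left translation by the connecting elements $(\omega_i')^{-1}\omega_i \in N$ relating two lifts. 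The resolution is to enlarge~$S_0$ to the finite set $S \subset N$ obtained by adjoining these finitely many connecting elements and closing under conjugation by the finitely many group elements~$\omega_i$ occurring in~$c$; here normality of~$N$ keeps~$S$ inside~$N$ and finiteness of~$c$ keeps~$S$ finite. A Fubini-type exchange of the sums over~$\tau$ and over~$\eta_0$ (for fixed $\eta_0 \in \partial_S F$ the admissible~$\tau$ number~$|F|^n$) then bounds each defect by a constant multiple of $|\partial_S F| \cdot |F|^n$, the constant depending only on~$n$, $|S|$, and $\sum_\omega |a_\omega|$, which I fold into~$K$.
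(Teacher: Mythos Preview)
Your argument is correct, but it takes a substantially more laborious route than the paper. The paper absorbs the $\varepsilon$ at the \emph{chain} level rather than at the coefficient-module level: using the identification $C_n(\Gamma/N;A_N)\cong\bigl(A\otimes_\Gamma C_n(\Gamma)\bigr)_{N^{n+1}}$, one splits $c=c_0+c_1$ with $|c_0|_1\le|\overline c|_1+\varepsilon$ and $\overline{c_1}=0$, then writes $c_1$ as a finite sum of differences $c(j)-c(j)\cdot\sigma(j)$ with $\sigma(j)\in N^{n+1}$. For a single such difference $a\otimes(\gamma_0,\dots,\gamma_n)-a\otimes(\gamma_0\sigma_0,\dots,\gamma_n\sigma_n)$ the averaged chain is a difference of two sums over~$F^{n+1}$ that cancel except on a set of size $\le 2(n+1)\,|\partial_S F|\cdot|F|^n$, giving the bound in one line.

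By contrast, you push the $\varepsilon$ into the quotient seminorm on~$A_N$, which forces you to first isolate a one-variable F{\o}lner estimate in~$A$, then reduce the $(n{+}1)$-variable average to fibres~$G_\tau\subset F$, and finally repair the mismatch between the different $G_\tau^\omega$ and the conjugated boundaries $\partial_{S_0}G_\tau$ via an enlarged set~$S$ and a Fubini count. All of this is correct (and your last paragraph sketches exactly the right bijection $\{(\tau,\eta_0):\eta_0\in G_\tau\}\leftrightarrow F^{n+1}$ needed to get the $|\partial_S F|\cdot|F|^n$ bound), but it is bookkeeping that the paper avoids entirely by working with the $N^{n+1}$-coinvariants of the whole chain module rather than the $N$-coinvariants of~$A$. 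The payoff of your approach is that the module-level lemma
\[
\Bigl|\tfrac1{|G|}\sum_{\eta\in G}\eta^{-1}\!\cdot a\Bigr|\le|\overline a|+\varepsilon+K_0\,\frac{|\partial_{S_0}G|}{|G|}
\]
is isolated as a reusable statement; the payoff of the paper's approach is a much shorter proof with no fibrewise analysis and no need to conjugate~$S$.
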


\begin{proof}
  Let $\varepsilon \in \R_{>0}$.  Using the canonical isometric
  isomorphism between the chain module~$A \otimes_\Gamma
  C_n(\Gamma/N)$ (equipped with the semi-norm induced by the
  $\ell^1$-norm) and $C_n(\Gamma/N;A_N)$, we see that we can decompose
  \[ c = c_0 + c_1
  \]
  with~$c_0, c_1 \in C_n(\Gamma;A)$ and $\overline{c_0} = \overline c$ as
  well as $|c_0|_1 \leq |\overline c|_1 + \varepsilon$. Because the averaging
  maps of Proposition~\ref{prop:avg} are linear and have norm at most~$1$,
  we may assume without loss of generality that $\overline c = 0$. Then the
  canonical isomorphism (where $N^{n+1}$ acts by component-wise multiplication
  from the right)
  \[ C_n(\Gamma/N;A_N) \cong \bigl(A \otimes_\Gamma C_n(\Gamma)\bigr)_{N^{n+1}} 
  \]
  shows that we can write~$c$ in the form
  \[ c = \sum_{j=1}^m \bigl( c(j) - c(j) \cdot \sigma(j)\bigr)
  \]
  with $m\in \N$, $c(1), \dots, c(m) \in C_n(\Gamma;A)$, and
  $\sigma(1), \dots, \sigma(m) \in N^{n+1}$. We then set
  \begin{align*}
    S & := \bigl\{ \sigma(j)_k \bigm| j \in \{1,\dots,m\},\ k \in \{0,\dots,n\}\bigr\}
    \\
    K & := 2 \cdot (n+1) \cdot \max_{j \in \{1,\dots,m\}} \bigl| c(j)\bigr|_1. 
  \end{align*}
  By definition of the $\ell^1$-norm on~$C_n(\Gamma;A)$, we may now assume
  that $c$ is of the form
  \[ c = a \otimes (\gamma_0, \dots, \gamma_n) - a \otimes (\gamma_0 \cdot \sigma_0, \dots,
     \gamma_n \cdot \sigma_n)
  \]
  with $a \in A$ and $\gamma_0, \dots, \gamma_n \in \Gamma$, $\sigma_0, \dots, \sigma_n \in N$.    
  Then the definition of the $S$-boundary shows that for all non-empty finite subsets~$F \subset N$
  we have
  \begin{align*}
    \bigl|(\id_A \otimes_\Gamma \psi_n^F)(c)\bigr|_1
    & = \frac1{|F|^{n+1}} \cdot \biggl|a \otimes
    \biggl(\sum_{\eta \in F^{n+1}} (\gamma_0\cdot \eta_0, \dots, \gamma_n \cdot \eta_n)\\
    & \qquad\qquad\qquad- \sum_{\eta \in F^{n+1}} (\gamma_0 \cdot \sigma_0\cdot \eta_0, \dots,
    \gamma_n \cdot \sigma_n \cdot \eta_n)\biggr)\biggr|_1
    \\
    & \leq
    2 \cdot (n+1) \cdot |a| \cdot \frac{|\partial_S F|}{|F|}.
  \end{align*}
  This gives the desired estimate.
\end{proof}

\begin{rem}
  In the case of trivial $\R$-coefficients, the same argument
  shows: For every chain~$c \in C_n(\Gamma;\R)$
  there is a finite set~$S \subset N$ with the following property: For
  all non-empty finite subsets~$F \subset N$ we have
  \[ \bigl|(\id_\R \otimes_\Gamma \psi_n^F)(c)\bigr|_1
     \leq |\overline c|_1 + 2 \cdot (n+1) \cdot \frac{|\partial_S F|}{|F|}.
  \]
\end{rem}

\subsection{Proof of Theorems~\ref{mainthm} and~\ref{mainthmcoeff}}

As Theorem~\ref{mainthm} is a special case of
Theorem~\ref{mainthmcoeff}, it suffices to prove the latter:

\begin{proof}[Proof of Theorem~\ref{mainthmcoeff}]
  Let $\varepsilon \in \R_{>0}$ and let $S \subset N$ be a finite set
  that is adapted to~$c$ and $\varepsilon$ and let $K \in \R_{>0}$ be
  a constant as provided by Proposition~\ref{prop:normestimate}.  Let
  $k \in \N$.  By construction, we have $\psi_k = \id_A \otimes_\Gamma
  \psi^{F_k}_n$ (where $\psi^{F_k}_n$ is defined in
  Proposition~\ref{prop:avg}).  In view of Corollary~\ref{cor:avg},
  $c_k := \psi_k(c)$ is a cycle that represents~$[c] \in
  H_n(\Gamma;A)$. Moreover, Proposition~\ref{prop:normestimate} shows
  that
  \[ |c_k|_1 \leq |\overline c|_1 + \varepsilon + K \cdot \frac{|\partial_S F_k|}{|F_k|}.
  \]
  Because~$(F_k)_{k \in \N}$ is a F\o lner sequence for~$N$ we have
  $\lim_{k \rightarrow \infty} |\partial_S F_k|/|F_k| = 0$. Therefore,
  $\lim_{k \rightarrow \infty} |c_k|_1 \leq  |\overline c|_1 + \varepsilon$.
  Letting $\varepsilon$ go to~$0$ gives the inequality $\lim_{k \rightarrow \infty}|c_k|_1 \leq |\overline c|_1$.

  The converse inequality holds because~$\|\psi_k\|\leq 1$ and $|\overline c|_1\leq |c|_1$. 
\end{proof}

\begin{rem}\label{rem:nonfingen}
  This argument also generalises to the case that the amenable normal
  subgroup is not finitely generated; one just has to first take a
  finite set~$S \subset N$ that is adapted to the cycle in question
  and then to pick a F\o lner sequence for the finitely generated
  amenable subgroup~$\langle S\rangle_N$ of~$N$.
\end{rem}

\section{Explicit $\ell^1$-efficient cycles}\label{sec:exp}

\subsection{Efficient cycles: groups}

We can now prove Corollary~\ref{cor:isometriccoeff} (which implies
Corollary~\ref{cor:isometric}). In fact, we will explain how one
can produce explicit $\ell^1$-efficient cycles (using the input
specified in the proof).

\begin{proof}[Proof of Corollary~\ref{cor:isometriccoeff}]
  Clearly, the homomorphism~$H_*(\pi;A) \colon H_*(\Gamma;A)
  \longrightarrow H_*(\Gamma/N;A_N)$ satisfies~$\|H_*(\pi;A)\| \leq
  1$. Thus, it suffices to prove that the $\ell^1$-semi-norm cannot
  decrease under~$H_*(\pi;A)$: 

  Let $\alpha \in H_n(\Gamma;A)$ and let $\overline \alpha :=
  H_n(\pi;A)(\alpha)$ be the push-forward of~$\alpha$. We will now
  produce cycles representing~$\alpha$ whose~$\ell^1$-norm
  approximates~$\|\overline\alpha\|_1$.  As input for our
  construction, we need the following:
  \begin{itemize}
  \item A cycle~$c \in C_n(\Gamma;A)$ representing~$\alpha$;
    let $\overline c := C_n(\pi;A)(c)$.
  \item For every~$m \in \N_{>0}$ a cycle~$z_m \in C_n(\Gamma/N;A_N)$
    that represents~$\overline \alpha$ and satisfies
    \[ |z_m|_1 \leq \|\overline \alpha\|_1 + \frac1m.
    \]
  \item For every~$m \in \N_{>0}$ a chain~$\overline b_m \in C_{n+1}(\Gamma/N;A_N)$
    with
    \[ \partial \overline b_m = z_m - \overline c;
    \]
    let $b_m \in C_{n+1}(\Gamma;A)$ be some $\pi$-lift of~$\overline
    b_m$.
  \item A F\o lner sequence~$(F_k)_{k \in \N}$ for~$N$; let
    $(\psi_k)_{k \in \N}$ the corresponding sequence of averaging maps
    as in Theorem~\ref{mainthmcoeff}.
  \end{itemize}
  For~$k,m \in \N$ we then set
  \[ c_{k,m} := (\id_A \otimes_\Gamma \psi_k)(c + \partial b_m)
     \in C_n(\Gamma;A).
  \]
  From Theorem~\ref{mainthmcoeff}, we obtain for every~$m \in \N$ that
  \begin{align*}
    \lim_{k \rightarrow \infty} |c_{k,m}|_1
    = \bigl| C_n(\pi;A)(c + \partial b_m)\bigr|_1
    = | \overline z_k|_1
    \leq \|\overline\alpha\|_1 + \frac1m.
  \end{align*}
  Moreover, Proposition~\ref{prop:normestimate} can also be used to get
  an explicit estimate on the rate of convergence in terms of the F\o lner
  sequence~$(F_k)_{k \in \N}$. 
\end{proof}

The situation is particularly simple if the push-forward satisfies
$\overline \alpha =0$ because we can then take~$z_m := 0$ for all~$m
\in \N$. So, in this case, we only need to find a cycle~$c$
representing~$\alpha$, a single filling of~$\overline c$, and a F\o lner
sequence for~$N$.

\begin{rem}
  The same modifications as in Remark~\ref{rem:nonfingen} allow also to
  generalise Corollary~\ref{cor:isometriccoeff} and its explicit proof to
  the case that the normal subgroup is not finitely generated. 
\end{rem}

\subsection{Efficient cycles: spaces}\label{subsec:asph}

Using Proposition~\ref{prop:comp}, we can translate the explicit
constructions in the proof of Theorem~\ref{mainthmcoeff} and
Corollary~\ref{cor:isometriccoeff} to the case of aspherical spaces.
At this point it might be helpful to unravel these conversions and
spell out the action of tuples on singular simplices (as needed in
the averaging maps) in more explicit
terms:

Let $M$ be an aspherical space (with universal covering and deck
transformation action), let $\Gamma := \pi_1(M)$ be the fundamental
group of~$M$, and let $D \subset \ucov M$ be a set-theoretic
fundamental domain for the deck transformation action. If $\sigma \in
\map(\Delta^n,\ucov M)$ and $(\eta_0, \dots, \eta_n) \in
\Gamma^{n+1}$, then we construct $\sigma \cdot (\eta_0, \dots,
\eta_n)$ as follows:
\begin{itemize}
\item We determine the group elements~$\gamma_0, \dots, \gamma_n \in \Gamma$ 
  with~$\sigma(e_j) \in \Gamma_j \cdot D$.
\item We then look at the tuple~$(\gamma_0\cdot \eta_0, \dots,
  \gamma_n \cdot \eta_n) \in \Gamma^{n+1}$. 
\item Using the inductive filling procedure alluded to in
  Proposition~\ref{prop:comp}, we reconstruct a singular simplex
  on~$\ucov M$ from~$(\gamma_0 \cdot \eta_0, \dots, \gamma_n \cdot \eta_n)$.
  This is the desired singular simplex~$\sigma \cdot (\eta_0, \dots, \eta_n)$.
\end{itemize}
For a finite subset~$F \subset \Gamma$, the averaging map then has the form
\begin{align*}
  C_n(\ucov M;\R) & \longrightarrow C_n(\ucov M;\R) \\
  \map(\Delta^n,\ucov M) \ni \sigma
  & \longmapsto \frac1{|F|^{n+1}} \cdot \sum_{\eta \in F^{n+1}} \sigma \cdot \eta.
\end{align*}

In particular, we hence obtain explicit $\ell^1$-efficient fundamental
cycles of aspherical manifolds with (finitely generated) amenable
normal subgroup in terms of knowledge of cycles on the quotient and
F\o lner sequences of this normal subgroup.


\medskip
\vfill

\noindent
\emph{Clara L\"oh}\\[.5em]
  {\small
  \begin{tabular}{@{\qquad}l}
    Fakult\"at f\"ur Mathematik,
    Universit\"at Regensburg,
    93040 Regensburg\\
    \textsf{clara.loeh@mathematik.uni-r.de},\\
    \textsf{http://www.mathematik.uni-r.de/loeh}
  \end{tabular}}

\end{document}